\newcommand{\ud}{\mathrm{d}}
\newcommand{\cref}[1]{(\ref{#1})}
\newcommand{\CR}{\mbox{\tiny{CR}}}
\newcommand{\md}{\:\mbox{d}}
\newtheorem{Def}{Definition}[section]
\newtheorem{remark}[Def]{Remark}
\newtheorem{theorem}{Theorem}[section]
\date{}
\journal{J. of Comp. \& Appl. Math.}
\begin{document}

\begin{frontmatter}



\title{Explicit eigenvalue bounds of differential operators defined by symmetric positive semi-definite bilinear forms}

\author{Xuefeng LIU}

\ead{xfliu.math@gmail.com}

\address{Graduate School of Science and Technology, Niigata University, Japan. }

\begin{abstract}
Recently, the eigenvalue problems formulated with symmetric positive definite bilinear forms have been well investigated with the aim of explicit bounds for the eigenvalues.
In this paper, the existing theorems for bounding eigenvalues are further extended to deal with the case of eigenvalue problems defined by positive semi-definite bilinear forms.
As an application, the eigenvalue estimation theorems are applied to the error constant estimation for polynomial projections.
\end{abstract}

\begin{keyword}

Explicit eigenvalue bounds \sep finite element method \sep positive semi-definite bilinear forms \sep projection error constants \sep Verified computation



\end{keyword}

\end{frontmatter}

\section{Introduction}

To give explicit eigenvalue bounds is greatly needed in the field of verified computing for the solution verification for non-linear partial differential equations.
Recently, compared to the classical analysis for qualitative error analysis of eigenvalue approximation, 
the research on explicit bounds of eigenvalues has become a new topic in the field of numerical analysis.

Early work about explicit bound of eigenvalues based on the finite element method (FEM) can be traced back to the work of \cite{Kikuchi-Liu-2007, Liu-Kikuchi-2010, Kobayashi2011,Kobayashi2015}, 
where the upper bounds of various interpolation error constants are considered by estimating the first eigenvalue of the corresponding differential operator.
In \citep{Liu-RIMS-2011, Liu-Oishi-SINUM-2013, Liu-AMC-2015}, the lower bounds for leading eigenvalues of differential operators are provided; see also the work of \cite{CarGed2014,CarGal2014}.

Particularly, the general framework proposed in \cite{Liu-AMC-2015} can be applied to eigenvalue problems formulated as $M(u,v)=\lambda N(u,v)$, where $M,N$ are both symmetric positive definite bilinear forms (see detailed setting of eigenvalue problems in \S 4).
Such a framework has been applied to the eigenvalue problems of the Laplace operator \cite{Liu-AMC-2015}, the Biharmonic operator \cite{liu-you-2018,Liao-Shu-Liu-2019}, the Stokes operator \cite{Xie2-Liu-JJIAM-2018}. 
In \cite{You-Xie-Liu-2019}, the framework is further extended to the case that $N$ is positive semi-definite, and the lower eigenvalue bound for the Steklov eigenvalue problem is provided.
This paper provides a summary of the results of eigenvalue estimation under different settings of $M$ and $N$, and the case that $M$ is positive semi-definite is newly discussed along with a concise method to obtain lower eigenvalue bounds.
As an application, these results are applied to bound the error constants for polynomial projection over 2D and 3D finite elements.

The rest of the paper is organized as follows. 
In \S2, two model eigenvalue problems are introduced. In \S3, the eigenvalue problems are sorted into $3$ cases upon different settings and the theorems to obtain lower eigenvalue bounds are described.
In \S4, the proposed methods in \S3 are applied to the model eigenvalue problems. \S4 displays numerical computation results for the error constant estimation of polynomial projection. 
\S5 is a summary of features of the proposed method in this paper.



\section{Model eigenvalue problems}
We consider the eigenvalue problems formulated with bilinear forms $M,N$ over function space $V$: 
Find $u\in V$ and $\lambda \in R$, such that
\begin{equation}
\label{eq-general-eig-pro}
M(u,v) = \lambda N(u,v)\quad \forall v \in V\:.
\end{equation}
The case that $M(\cdot,\cdot)$ and $N(\cdot,\cdot)$ are symmetric positive-definite is well considered in 
\cite{Liu-AMC-2015}. In this paper, we focus on the problems that either $M$ or $N$ is positive semi-definite.
More detailed setting about $M,N$ and $V$ is given in \S\ref{sec-general-theorems}. Since upper eigenvalue bounds can be easily obtained by using, for example,
 conforming finite element, we only discuss the lower bounds estimation in this paper.


Below, we show two concrete model eigenvalue problems that appear in numerical analysis of finite element methods.
We use the standard notation for Sobolev function spaces; see, e.g., \cite{babuska-osborn-1991}.

\subsection{Model eigenvalue problem 1}

Let us consider the model eigenvalue problem of Laplace operator with homogeneous Neumann boundary condition. \\

Find $u\in H^1(\Omega)$ and $\lambda \ge 0$ such that, 
\begin{equation}
\label{eq:ch2-eig-M-semi-1}
(\nabla u, \nabla v)=\lambda (u,v),\quad \forall v\in H^1(\Omega)\:.
\end{equation}
Let $M(u,v):=(\nabla u, \nabla v)_{\Omega}$. Then $M$ is positive semi-definite over $H^1(\Omega)$. \\

To deal with the non-zero eigenvalue in this model problem,
it is natural to consider the eigenvalue problem over $\mbox{Ker}(M)^{\perp}$ (see definition in (\ref{eq:ker_M_perp})),
%
%
%
%
However, in most cases, it is recommended to avoid solving the eigenvalue problem on the subspace $\mbox{Ker}(M)^{\perp}$, due to the following reasons.
\begin{enumerate}
\item [(1)] 
Since $\mbox{Ker}(M)^{\perp}$ is usually defined through constraint condition, 
one needs more efforts to construct explicit base functions in FEM computation.

\item [(2)] The {\em a priori} error estimation of the project from $\mbox{Ker}(M)^{\perp}$ to FEM space may be complex.
For this model eigenvalue problem, let us take the Crouzeix-Raviart FEM space $V^{\CR}_h$ (see definition in \S 4). 
and define $V^h$ by $V^h =\{ v_h  \in V_h^{\CR} ~|~ \int_\Omega v_h  \md \Omega =0 \}$.
In this case, the Crouzeix-Raviart interpolation operator $\Pi_h^{\CR}$ does not map $\mbox{Ker}(M)^{\perp}$ to $V^h$. 
The projection $P_h:\mbox{Ker}(M)^{\perp} \to V^h$ is no longer a locally defined operator like $\Pi_h^{\CR}$. 
\end{enumerate}




\subsection{Model eigenvalue problem 2: projection error constant}

Let $P_k$ be the projection that maps $u \in L^2(\Omega)$ to the space of polynomials of degree up to $k (k\ge 0)$, with respect to $L^2$ inner product. 
The following estimate is needed in the {\em a priori} error estimation construction for boundary value problems; see, e.g., \cite{Liu-Oishi-SINUM-2013}.
$$
\| u-P_k u\|_{L^2(\Omega)} \le  C  |u|_{H^{1}(\Omega)}\quad  \forall u \in H^{1}(\Omega)\:.
$$
It is easy to see that $\lambda=C^{-2}$ is the first eigenvalue of problem (\ref{eq-general-eig-pro}) with the following settings:
\begin{equation}
\label{eq:model_problem_3_setting}
V:=\{v \in H^1(\Omega):\int_{\partial \Omega} v ds=0\}, ~~~ M(u,v)= (\nabla u, \nabla v), ~~~ N(u,v) = (u-P_k u, v-P_k v)\:.
\end{equation}
Here, $M(\cdot, \cdot)$ is always positive definite; $N(\cdot,\cdot)$ is positive semi-definite for $k\ge 1$.
In case $k=0$, $N$ is positive definite and the eigenvalues here correspond to the positive ones of (\ref{eq:ch2-eig-M-semi-1}).

\section{Eigenvalue problem settings and lower eigenvalue bounds}

\subsection{Eigenvalue problem settings}

Upon the assumptions of $M$ and $N$, let us divide the eigenvalue problems into $3$ cases.

\begin{itemize}

\item Case 1: {Both $M$ and $N$ are positive definite}

\begin{itemize}
\item [($A1$)] Let $\widetilde{V}$ be a Hilbert space, the 
inner  product of which is  $\langle, \rangle$  and the  corresponding norm  denoted by  $\|\cdot \|_{\widetilde{V}}$.
${V}$ and $V^h$ are closed linear subspaces of $\widetilde{V}$, and ${V}^h$ is finite-dimensional.

\item [({$A2$})] $N(\cdot, \cdot)$ is symmetric positive definite bilinear form on $\widetilde{V}$.
The norm $\|\cdot \|_N:=\sqrt{N(\cdot, \cdot)}$ is compact respect to $\|\cdot\|_{\widetilde{V}}$. 
That is, every bounded sequence  of $\widetilde{V}$ under $\| \cdot\|_{\widetilde{V}}$ has a subsequence that is Cauchy under $\|\cdot\|_{N}$.

\item [({$A3$})] $M(\cdot, \cdot)$ is symmetric positive definite bilinear form on $\widetilde{V}$. 
The norm introduced by $M$, denoted by $\|\cdot\|_M:=\sqrt{M(\cdot, \cdot)}$, is equivalent to $\|\cdot\|_{\widetilde{V}}$. \\
%

\end{itemize}

\begin{remark}
The assumption (A1)--(A3) are designed to ease the theoretical analysis. 
For practical problems, the target eigenvalue problems will be configured in the space $V$ and be solved approximately in finite dimensional $V^h$. 
Then, $\widetilde{V}$ is selected as  $\widetilde{V}:=V+V^h$, where $M$ and $N$ should be properly defined.
\end{remark}



\item Case 2: {Positive definite $M$ and positive semi-definite $N$}

To solve the model eigenvalue problem 2 in the previous section, let us replace the (A2) condition for Case 1 with the 
following one. The assumption $(A1)$, $\widetilde{A2}$, $(A3)$, is the essentially the same as the one proposed in \cite{You-Xie-Liu-2019}.

\begin{itemize}

\item [($\widetilde{A2}$)] $N(\cdot, \cdot)$ is symmetric positive semi-definite bilinear form on $\widetilde{V}$.  
The semi-norm $\|\cdot \|_N:=\sqrt{N(\cdot, \cdot)}$ is compact respect to $\|\cdot\|_{\widetilde{V}}$. 

%

\end{itemize}




\item Case 3: {Positive semi-definite $M$ and positive definite $N$}

\label{sec-general-theorems}

In case that $M$ is positive semi-definite, let us first introduce the kernel space of $M$ in $V$ and $V^h$,
\begin{equation}
\label{eq:ker_M_h}
\mbox{Ker}(M):=\{v \in V |  M(v,v) =0 \}, \quad 
\mbox{Ker}_h(M):=\{v_h \in V^h |  M(v_h,v_h) =0 \}\:.
\end{equation}
Also, define the orthogonal complement space of $\mbox{Ker}(M)$ and $\mbox{Ker}_h(M)$
\begin{equation}
\label{eq:ker_M_perp}
\mbox{Ker}(M)^{\perp}:=\{u \in V ~|~  M(u,v) =0, \forall v \in \mbox{Ker}(M)\}\:.
\end{equation}
\begin{equation}
\label{eq:ker_M_h_perp}
\mbox{Ker}_h(M)^{\perp,h}:=\{u_h \in V^h ~|~  M(u_h,v_h) =0, \forall v_h \in \mbox{Ker}_h(M)\}\:.
\end{equation}

To deal with the eigenvalue problem with positive semi-definite $M$, 
let us replace assumption $(A3)$ in Case 1 by ($\widetilde{A3}$) as follows.
\begin{enumerate}



\item [($\widetilde{A3}$)] Let $M$ be a positive semi-definite bilinear form on $\widetilde{V}$ and $\mbox{dim}(\mbox{Ker}(M))<\infty$. 
The norm introduced by  $M$ on $\mbox{Ker}(M)^{\perp}$ is equivalent to $\|\cdot\|_{\widetilde{V}}$. 
Moreover, 
$$
\mbox{Ker}(M)=\mbox{Ker}_h(M)\:.
$$
\end{enumerate}

\end{itemize}

\vskip 0.1cm

For the above $3$ cases, let us defined eigenvalue problems by $M(\cdot,\cdot)$ and $N(\cdot,\cdot)$ over $V$:
Find $u\in V$ and $\lambda \in  \mathbb{R}$, such that,
\begin{equation}
\label{eq:eig-variational-case-1-2}
M(u, v) = \lambda N(u,v)\quad \forall v \in V.
\end{equation}
From arguments of compactness (see, e.g., \S 8 of \cite{babuska-osborn-1991}), the eigenpair of (\ref{eq:eig-variational-case-1-2}) can be denoted by 
$\{\lambda_k, u_k\}$ $(k=1,2, \dots,\infty)$  and
$N(u_i, u_j) =\delta_{ij}$ ($\delta_{ij}$: Kronecker's delta).

It is well known that the eigenvalues in Case 1 and 2 distribute as
$$
0<\lambda_1 \le \lambda_2 \le \cdots\:.
$$
For Case 3, there exist zero eigenvalues such that
$$0\le \lambda_1 \le \lambda_2  \le  \cdots \:.
$$
Moreoever, for any eigenpair $(\lambda_i, u_i)$ with $\lambda_i>0$, we have
$$
\lambda_i N(u_i,v)= M(u_i,v) \le \|u_i\|_M \|v\|_M \quad \forall v \in V.
$$
Thus, $N(u_i, v)=0$ for $v\in \mbox{Ker}(M)$ if $\lambda_i>0$.
\\

Let $n:=\mbox{dim}(V^h)$ for Case 1; $n:=\mbox{dim}(V^h) - \mbox{dim}(\mbox{Ker}_h(N))$ for Case 2, where $\mbox{Ker}_h(N):=\{v_h \in V^h ~ |~ N(v_h,v_h)=0 \}$.
Consider the eigenvalue problem over $V^h$: 
Find $u_h \in V^h$ and $\lambda_h \in \mathbb{R} $, such that,
\begin{equation}
\label{eq:eig-variational-h}
M(u_h, v_h) = \lambda_h N(u_h,v_h)\quad \forall v_h \in V^h.
\end{equation} 
Let $\{(\lambda_{h,k},u_{h,k})\}$ $(k=1,2,\cdots,n)$ be the eigen-pair of (\ref{eq:eig-variational-h}) with
$0 \le \lambda_{h,1} \le \lambda_{h,2} \cdots \le \lambda_{h,n}$. 
The eigenvalues $\lambda_{h,k}$ can be calculated rigorously by solving the corresponding matrix eigenvalue problem $Ax=\lambda Bx$. 
Notice that for each setting of $M$ and $N$ in Case 1, 2 and 3, either matrix $A$ and $B$ is positive definite and both matrices are symmetric.

\subsection{Lower bounds of eigenvalues for Case 1 and 2}


The following Theorem \ref{thm-lower-bound-eig-main} about lower eigenvalue bounds holds for Case 1 and Case 2. 
The proof are provided in \cite{Liu-AMC-2015} and \cite{You-Xie-Liu-2019}, respectively.

\begin{theorem}[Thm. 2.1 of \cite{Liu-AMC-2015}, Thm. 2.4 of \cite{You-Xie-Liu-2019}]
\label{thm-lower-bound-eig-main}
Define $V(h):=V+V^h$.
Let $P_h:V(h) \rightarrow V^h$ be the projection with respect to inner product $M(\cdot, \cdot)$, i.e., for any $u\in V(h)$
\begin{equation}
\label{eq:P_h_definition}
M( u-P_h u, v_h)=0 \quad \forall v_h \in V^h\:.
\end{equation}
Suppose the following error estimation holds for $P_h$: for any $u\in V$,
\begin{equation}
\label{thm-ch}
\| u-P_h u \|_{N} \le C_h \| u-P_h u \|_{M}\:.
\end{equation}
Let $\lambda_k$ and $\lambda_{h,k}$ be the ones defined in (\ref{eq:eig-variational-case-1-2}) and (\ref{eq:eig-variational-h}).
Then, we have
\begin{equation}
\label{thm-lower-bound}
\frac{\lambda_{h,k}}{1 + \lambda_{h,k} C_h^2} \le \lambda_{k} \quad (k=1,2,\cdots, n)\:.
\end{equation}
\end{theorem}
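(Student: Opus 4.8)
The plan is to reduce the asserted inequality to a symmetric form and then invoke the Courant--Fischer (min--max) characterization of eigenvalues, transporting the continuous eigenspace into $V^h$ by the $M$-orthogonal projection $P_h$. Since $\lambda_{h,k}=0$ makes the claim trivial and $M$ is positive definite in Cases 1 and 2 (so every $\lambda_k>0$), I may assume $\lambda_{h,k}>0$. Cross-multiplying and dividing by $\lambda_k\lambda_{h,k}>0$ shows that \eqref{thm-lower-bound} is equivalent to
\[
\frac{1}{\lambda_k} \le \frac{1}{\lambda_{h,k}} + C_h^2,
\]
so it suffices to bound the reciprocal $1/\lambda_{h,k}$ from below by $1/\lambda_k - C_h^2$. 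Using the compactness of $\|\cdot\|_N$ and the $N$-orthonormal spectral basis with $N(u_i,u_j)=\delta_{ij}$, $M(u_i,u_j)=\lambda_i\delta_{ij}$, I record the two max--min formulas
\[
\frac{1}{\lambda_k}=\max_{\substack{W\subseteq V\\ \dim W=k}}\ \min_{w\in W\setminus\{0\}}\frac{N(w,w)}{M(w,w)},\qquad
\frac{1}{\lambda_{h,k}}=\max_{\substack{W_h\subseteq V^h\\ \dim W_h=k}}\ \min_{w_h\in W_h\setminus\{0\}}\frac{N(w_h,w_h)}{M(w_h,w_h)}.
\]
In Case 2 one reads these quotients on the quotient modulo $\mathrm{Ker}(N)$, which is why $n$ is reduced by $\dim(\mathrm{Ker}_h(N))$; the structure of the argument is unchanged.

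Next I would choose an explicit competitor in the discrete max--min. Let $E_k:=\mathrm{span}\{u_1,\dots,u_k\}$ be the span of the first $k$ continuous eigenfunctions, on which $\min_{u\in E_k}N(u,u)/M(u,u)=1/\lambda_k$ and $M(u,u)\le\lambda_k N(u,u)$. Take $W_h:=P_h E_k\subseteq V^h$. For $u\in E_k$ set $w=P_h u$ and $e=u-P_h u$; the defining relation \eqref{eq:P_h_definition} gives $M(e,v_h)=0$, hence the Pythagorean identity $M(u,u)=M(w,w)+M(e,e)$, while the hypothesis \eqref{thm-ch} gives $\|e\|_N\le C_h\|e\|_M$. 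The target of this step is the pointwise bound
\[
\frac{N(P_h u,P_h u)}{M(P_h u,P_h u)}\ \ge\ \frac{1}{\lambda_k}-C_h^2\qquad(u\in E_k),
\]
since, taking the minimum over $W_h=P_h E_k$ and inserting it into the discrete max--min formula, this yields exactly $1/\lambda_{h,k}\ge 1/\lambda_k-C_h^2$.

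To establish the pointwise bound I would expand $N(u,u)=N(w,w)+2N(w,e)+N(e,e)$, substitute $M(w,w)=M(u,u)-M(e,e)$, and combine $M(u,u)\le\lambda_k N(u,u)$ with $\|e\|_N\le C_h\|e\|_M$. The main obstacle is the cross term $N(w,e)$: the projection only annihilates $M(w,e)$, not its $N$-analogue, so $N(w,e)$ must be absorbed by a weighted Young's inequality tuned so that precisely the coefficient $C_h^2$ (and not a larger constant) survives; getting this constant sharp is the delicate point on which the whole estimate turns. A secondary issue is admissibility in the max--min principle, namely $\dim W_h=k$, i.e. injectivity of $P_h$ on $E_k$. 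This is not a genuine difficulty: if $1/\lambda_k\le C_h^2$ the desired inequality is already trivial, and in the complementary regime $C_h^2\lambda_k<1$ any $u\in E_k$ with $P_h u=0$ would satisfy $\|u\|_N=\|e\|_N\le C_h\|e\|_M=C_h\|u\|_M\le C_h\sqrt{\lambda_k}\,\|u\|_N$, forcing $u=0$; hence $P_h$ is injective on $E_k$ exactly when it matters. Once the pointwise estimate is secured uniformly over $E_k$, passing to the minimum and then to the max--min inequality completes the argument, with Cases 1 and 2 differing only through the definition of $n$ and the restriction to the $N$-nondegenerate part of $V^h$.
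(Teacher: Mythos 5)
You take essentially the same route as the proof this paper points to: the paper does not prove Theorem \ref{thm-lower-bound-eig-main} itself but defers to Thm.~2.1 of \cite{Liu-AMC-2015} and Thm.~2.4 of \cite{You-Xie-Liu-2019}, and the argument there is exactly yours --- a min--max argument (in your case, max--min for the reciprocals) over the trial space $P_hE_k$, the Pythagorean identity $M(P_hu,P_hu)=M(u,u)-\|u-P_hu\|_M^2$, hypothesis (\ref{thm-ch}), and the same two reductions (the bound is trivial when $C_h^2\lambda_k\ge 1$; otherwise $P_h$ is injective on $E_k$, so $\dim P_hE_k=k$). The only step you leave unexecuted is the crux, the pointwise bound $N(P_hu,P_hu)/M(P_hu,P_hu)\ge \lambda_k^{-1}-C_h^2$ on $E_k$; it does close, and without any cross-term/Young juggling: normalize $N(u,u)=1$, set $\rho:=M(u,u)\le\lambda_k$ and $t:=\|u-P_hu\|_M$, note that $\|u-P_hu\|_N\le C_ht\le C_h\sqrt{\lambda_k}<1$ so the reverse triangle inequality gives $\|P_hu\|_N\ge 1-C_ht>0$, and then
\[
\frac{N(P_hu,P_hu)}{M(P_hu,P_hu)}\ \ge\ \frac{(1-C_ht)^2}{\rho-t^2}\ =\ \Bigl(\frac{1}{\rho}-C_h^2\Bigr)+\frac{(t-C_h\rho)^2}{\rho(\rho-t^2)}\ \ge\ \frac{1}{\lambda_k}-C_h^2\:,
\]
i.e.\ the sharp constant comes out of a perfect square rather than a delicately tuned inequality. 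With this computation inserted, your proposal is complete and coincides with the cited proof, your reciprocal formulation being a cosmetic variant that also handles the semi-definite $N$ of Case 2 cleanly (the Rayleigh quotients keep the positive-definite form $M$ in the denominator).
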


\subsection{Lower bounds of eigenvalues for Case 3}

We prefer to consider the eigenvalue problem defined on the original space $V$, rather than subspace $\mbox{Ker}(M)^{\perp}$. 
For this purpose, let us introduce an interpolation $\Pi_h:V(h)\rightarrow V^h$ satisfying

\begin{itemize}
\item [(1)] Orthogonalty with respect to $M(\cdot,\cdot)$: for any $u \in V$,
\begin{equation}
\label{eq:orthogonality_pi_h}
{M}({u}-\Pi_h {u}, v_h) = 0 \quad \forall v_h\in {V}^h.
\end{equation}
\item [(2)]Invariant of kernel space under $\Pi_h$: 
\begin{equation}
\label{eq:invariant_kernel_space_P_h}
\Pi_h \mbox{Ker}(M)=\mbox{Ker}(M) (=\mbox{Ker}_h(M))\:.
\end{equation}
\end{itemize}
Notice that, $M(\cdot, \cdot)$ is positive semi-definite and thus not an inner product of $V(h)$. However, 
over $\mbox{Ker}(M)^{\perp}$, $M(\cdot, \cdot)$ is positive definite and can be regarded as an inner product.\\

Below is the theorem to provide lower eigenvalue bounds for Case 3.

\begin{theorem}
\label{thm-lower-bound-eig-main-M}
Suppose the following elation holds for $\Pi_h$: for any $u\in V$,
\begin{equation}
\label{thm-ch-M}
\| u- \Pi_h u \|_{N} \le C_h \| u- \Pi_h u \|_{M}\:.
\end{equation}
Let $\lambda_k$ and $\lambda_{h,k}$ be the ones defined in (\ref{eq:eig-variational-case-1-2}) and (\ref{eq:eig-variational-h}).
Then, we have
\begin{equation}
\label{thm-lower-bound-M}
\frac{\lambda_{h,k}}{1 + \lambda_{h,k} C_h^2} \le \lambda_{k} \quad (k=1,2,\cdots, \mbox{dim}(V^h)\:.
\end{equation}
\end{theorem}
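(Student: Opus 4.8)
The plan is to deduce the result from Theorem \ref{thm-lower-bound-eig-main} by factoring out the common kernel $K:=\mbox{Ker}(M)=\mbox{Ker}_h(M)$ and reducing Case 3 to a Case 1 configuration on the $N$-orthogonal complement of $K$. Throughout, write $m:=\dim K<\infty$.

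I would first record the spectral structure. Since $M(u,v)=\lambda N(u,v)$ with $\lambda=0$ forces $M(u,u)=0$, the zero eigenspaces of (\ref{eq:eig-variational-case-1-2}) and (\ref{eq:eig-variational-h}) are exactly $\mbox{Ker}(M)$ and $\mbox{Ker}_h(M)$, both equal to $K$; hence $\lambda_1=\cdots=\lambda_m=0$ and $\lambda_{h,1}=\cdots=\lambda_{h,m}=0$, while the remaining eigenvalues are positive. For $k\le m$ the bound (\ref{thm-lower-bound-M}) is trivial because its left-hand side vanishes, so all the content sits in the range $k>m$.

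Next I would introduce $W:=\{w\in\widetilde V:N(w,v)=0,\ \forall v\in K\}$ and set $\widetilde V':=W$, $V':=V\cap W$, $V_h':=V^h\cap W$. Because $K$ is finite dimensional, the $N$-orthogonal projection onto $K$ is well defined and gives the $N$-orthogonal splittings $\widetilde V=K\oplus W$, $V=K\oplus V'$, $V^h=K\oplus V_h'$; in particular $\dim V_h'=\dim V^h-m$. On $W$ the form $M$ is positive definite (if $M(w,w)=0$ then $w\in K\cap W$, so $N(w,w)=0$ and $w=0$), and exploiting $\dim K<\infty$ together with $(\widetilde{A3})$ one checks that $\|\cdot\|_M$ and $\|\cdot\|_{\widetilde V}$ are equivalent on $W$; with $(A2)$ this makes $(\widetilde V',V',V_h',M,N)$ a valid Case 1 setting. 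The observation already used in the text, namely $N(u_i,v)=0$ for $v\in K$ whenever $\lambda_i>0$, shows (the same argument applying on the discrete side) that the positive eigenvalues of (\ref{eq:eig-variational-case-1-2}) and (\ref{eq:eig-variational-h}) are precisely the eigenvalues of the reduced problems on $V'$ and $V_h'$, with the re-indexing $\lambda_{m+j}$ and $\lambda_{h,m+j}$ for $j=1,2,\dots$.

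The decisive step is to transfer the hypothesis (\ref{thm-ch-M}) to the $M$-orthogonal projection $P_h':V'(h)\to V_h'$ that appears in Theorem \ref{thm-lower-bound-eig-main}. For $u\in V'$ I would split $\Pi_h u=(\Pi_h u)_K+(\Pi_h u)_W$ along $V^h=K\oplus V_h'$. Using the orthogonality (\ref{eq:orthogonality_pi_h}) and $M(\kappa,v_h)=0$ for $\kappa\in K$, one gets $M(u-(\Pi_h u)_W,w_h)=0$ for all $w_h\in V_h'$, so by uniqueness $P_h'u=(\Pi_h u)_W$ and $\Pi_h u-P_h'u\in K$. Since the $K$-component is $M$-null, $\|u-\Pi_h u\|_M=\|u-P_h'u\|_M$; and since $u-P_h'u\in W$ is $N$-orthogonal to $(\Pi_h u)_K\in K$, we get $\|u-P_h'u\|_N\le\|u-\Pi_h u\|_N$. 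Combining these with (\ref{thm-ch-M}) yields $\|u-P_h'u\|_N\le C_h\|u-P_h'u\|_M$, i.e. the reduced estimate holds with the \emph{same} constant $C_h$. Theorem \ref{thm-lower-bound-eig-main} applied to the reduced Case 1 problem then gives $\lambda_{h,m+j}/(1+C_h^2\lambda_{h,m+j})\le\lambda_{m+j}$ for $j=1,\dots,\dim V^h-m$, which combined with the trivial range $k\le m$ is exactly (\ref{thm-lower-bound-M}). I expect this transfer to be the main obstacle: one must confirm that $P_h'u$ really is the reduced $M$-orthogonal projection and that replacing $\Pi_h$ by $P_h'$ does not enlarge the constant, and it is precisely here that the $M$-orthogonality and the kernel invariance (\ref{eq:invariant_kernel_space_P_h}) of $\Pi_h$, together with $\dim K<\infty$, are genuinely needed.
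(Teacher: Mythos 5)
Your proof is correct, and its skeleton is the same as the paper's: split off the common kernel $K=\mbox{Ker}(M)=\mbox{Ker}_h(M)$, identify the positive spectra of (\ref{eq:eig-variational-case-1-2}) and (\ref{eq:eig-variational-h}) with the reduced problems on the $N$-orthogonal complements, and apply Theorem \ref{thm-lower-bound-eig-main} to the operator $P_h:=(1-\pi_0)\Pi_h$ (your $(\Pi_h u)_W$ is exactly this operator). The genuine difference lies in how the hypothesis (\ref{thm-ch-M}) is transferred to $P_h$. The paper asserts that $\pi_0\Pi_h u=0$ for every $u$ in the complement, hence $u-P_h u=u-\Pi_h u$ and both the $M$- and $N$-norms transfer with \emph{equality}. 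That assertion does not follow from (\ref{eq:orthogonality_pi_h}) and (\ref{eq:invariant_kernel_space_P_h}), and it is in fact false in the very model problem the theorem is later applied to: with $M(u,v)=(\nabla u,\nabla v)$, $N(u,v)=(u,v)$ and $\Pi_h=\Pi_h^{\CR}$, take $u=b-c$ where $b$ is the bubble function of a single element (all its edge integrals vanish, so $\Pi_h^{\CR}b=0$) and $c$ is its mean value; then $u$ is $N$-orthogonal to the constants, yet $\Pi_h^{\CR}u=-c$ is not, so $\pi_0\Pi_h u\neq 0$. Your argument bypasses this claim entirely: you use only that the kernel component $(\Pi_h u)_K$ is $M$-null, which gives the equality $\|u-\Pi_h u\|_M=\|u-P_h u\|_M$, and that it is $N$-orthogonal to $u-P_h u$, which gives the inequality $\|u-P_h u\|_N\le\|u-\Pi_h u\|_N$; this inequality points in exactly the direction needed and costs nothing in the constant $C_h$. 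So your write-up is not merely a rederivation; it repairs the one overstated step in the paper's own proof while reaching the same conclusion. The remaining bookkeeping (the $m=\dim K$ zero eigenvalues on both sides, the identification of $(\Pi_h u)_W$ as the reduced $M$-orthogonal projection, and the verification of the Case 1 assumptions on $W$) is handled correctly and makes explicit what the paper leaves implicit.
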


\begin{proof}
From the property (\ref{eq:invariant_kernel_space_P_h}), we know $\lambda_k$'s and $\lambda_{h,k}$'s have the same number of zero eigenvalues.
Next, we consider the lower bounds of non-zero eigenvalues, which are based on the result of Theorem  \ref{thm-lower-bound-eig-main}. 
Let $\pi_{0}:V(h) \to  \mbox{Ker}(M)$ be the projection with respect to $N(\cdot, \cdot)$. 
Define $P_h: \mbox{Ker}(M)^{\perp} \to \mbox{Ker}(M)^{\perp,h}$ by 
$$
P_h := (1-\pi_{0}) \Pi_h\:.
$$
Moreover, for $u \in \mbox{Ker}(M)^{\perp}$, 
$$
\pi_{0} \Pi_h u=0,\quad u - P_h u= u-(1-\pi_{0}) \Pi_h u= u - \Pi_h u\:.
$$
Therefore, $P_h$ is a projection in $\mbox{Ker}(M)^{\perp}$ with respect to inner product $M(\cdot, \cdot)$ .
$$
\| u - P_h u\|_N = \| u - \Pi_h u\|_N \le C_h \| u - \Pi_h u\|_M =  C_h \| u - P_h u\|_M\:.
$$
Thus, we can apply Theorem \ref{thm-lower-bound-eig-main} to have the lower bounds for the non-zero eigenvalues.

\end{proof}

\section{Application of Theorem \ref{thm-lower-bound-eig-main} and \ref{thm-lower-bound-eig-main-M} to model eigenvalue problems}

To solve the model eigenvalue problems, let us introduce the Crouzeix-Raviart non-conforming finite element space, which has 
the projection operator reducing to in interpolation operator. Thus, one can give an explicit upper bound for the constant $C_h$ required in (\ref{thm-ch}) and
(\ref{thm-ch-M}).

Let $\Omega \subset \mathbb{R}^d$ ($d=2,3$) be a polygonal domain in 2D space or a polyhedron in 3D space. Suppose $\Omega$ is bounded.
Let $\mathcal{T}_h$ be a face-to-face subdivision of $\Omega$. 
The diameter of element $K \in \mathcal{T}_h$ is denoted by $h_K$ and the mesh size $h$ describes the maximum diameter among all elements $K \in \mathcal{T}_h$. 

The Crouzeix--Raviart finite element space ${V}_h^{\CR}$ is given by
\begin{equation*}
\label{fem_space_CR}
\begin{split}
{V}_h^{\CR} := & \{  v \:|\: v \mbox{ is a piecewise-linear function on $\mathcal{T}_h$;} \int_{e} u \md s \mbox{ is continuous } \\
 & \mspace{2mu} \mbox{across each interior face } e   \}.
\end{split}
\end{equation*}

Since $V_h^{\CR} \not\subset H^1(\Omega)$, we introduce the discrete gradient operator $\nabla_h$, 
which takes the derivatives of $v_h\in V_h^{\CR}$ element-wise. For simplicity, $\nabla_h$ is still written as $\nabla$.
The seminorm $\| \nabla_h v_h \|_{(L^2(\Omega))^2}$ 
is still denoted by $|v_h|_{H^1(\Omega)}$.
Particulary, we can extend the definition of $M(\cdot, \cdot)$ as follows
$$
M(u,v) := \sum_{K\in \mathcal{T}_h} \int_K \nabla_h u \cdot \nabla_h v ~ \mbox{d}K \:.
$$ 

Let us introduce the Crouzeix--Raviart interpolation operator $\Pi_h^{\CR}: H^1(\Omega) \mapsto {V}_h^{\CR}$,
 which is defined element-wise.
For each element $K \in \mathcal{K}_h$, denote the faces by $e_i$ and the nodes by $p_i$, $(i=1,\cdots, d+1)$.
For $u\in H^1(\Omega)$, $(\Pi_h^{\CR} u)|_K$ is a linear polynomial satisfying
\begin{equation}
\int_{e_i} (\Pi_h^{\CR} u)|_K \,\ud s = \int_{e_i}  u \,\ud s, \quad i=1,\cdots, d+1\:.
\end{equation}
%

The interpolation operator $\Pi_{h}^{\CR}$ has the property of orthogonality: for $u \in H^1(\Omega)$,
\begin{equation}
\label{orthogonality_interpolation_CR}
( \nabla_h (\Pi_h^{\CR} u - u), \nabla v_h ) =0 \quad \forall\,   ~ v_h \in V_h^{\CR}\:.
\end{equation}
The following error estimation is provided in \cite{Liu-AMC-2015}. 
\begin{itemize}
\item In case $\Omega \subset \mathbf{R}^2$:
\begin{equation}
\label{cr_consant_2d}
\|u-\Pi_h^{\CR}u\|_{L^2} \le 0.1893h \|_{L^2}\nabla(u-\Pi_h^{\CR}u)\|, \quad u\in H^1(\Omega) \:.
\end{equation}
\item In case $\Omega \subset \mathbf{R}^3$:
\begin{equation}
\label{cr_consant_3d}
\|u-\Pi_h^{\CR}u\|_{L^2} \le 0.3804h \|\nabla(u-\Pi_h^{\CR}u)\|_{L^2}, \quad u\in H^1(\Omega) \:.
\end{equation}

\end{itemize}



The eigenvalue problems in \S2 can be solved with the following spaces settings.

\begin{itemize}
\item Model problem 1

Take $V^h=V_h^{\CR}$. 
It is easy to see that $\Pi_h^{\CR}$ satisfies the conditions (\ref{eq:orthogonality_pi_h}) and
(\ref{eq:invariant_kernel_space_P_h}). Thus, we can apply Theorem \ref{thm-lower-bound-eig-main-M} with 
$\Pi_h := \Pi_h^{\CR}$. The $\pi_0$ operator in the proof of Theorem \ref{thm-lower-bound-eig-main-M} is just the average operator $P_0$.
The constant $C_h$ has explicit bound as shown in (\ref{cr_consant_2d}) and (\ref{cr_consant_3d}). 

%
\item Model problem 2

 Define $V^h:=\{v_h \in V_h^{\CR} | \int_{\partial\Omega} v_h ds=0 \}$.
Then, for any $v \in V$ (see definition of $V$ in (\ref{eq:model_problem_3_setting})), we have $\Pi_h^{\CR} v \in V^h$.
Notice that $\|(I-P_k)u\|_{L^2} \le \|u\|_{L^2}$ for $u\in L_2(\Omega)$. We have
$$
\|(1-\Pi_h^{\CR})u\|_N = \|(I-P_k)(1-\Pi_h^{\CR})u \|_{L^2} \le \|(I-\Pi_h^{\CR})u\|_{L^2} \le C_h \|\nabla (I-\Pi_h^{\CR})\|_{L^2}\:,
$$
where $C_h$ has explicit value in (\ref{cr_consant_2d}) and (\ref{cr_consant_3d}).
Thus, we can apply Theorem \ref{thm-lower-bound-eig-main} along with $P_h:=\Pi_h^{\CR}$.
\begin{remark}
Notice that if $V$ and $V^h$ are selected as 
$$
V:=\{v \in H^1(\Omega):\int_{ \Omega} v d\Omega=0\},\quad
V^h:=\{v_h \in V_h^{\CR} | \int_{\Omega} v_h d\Omega=0 \}\:.
$$
Then, $\Pi_h^{\CR}$ does not map $V$ to $V^h$ any more.

\end{remark}
\end{itemize}

\begin{remark} The two model eigenvalue problems introduced in this paper only involve the first derivative of functions in bilinear forms and the Crouzeix-Raviart FEM space is utilized.
For the eigenvalue problem determined by Biharmonic operators, for example, the error constant estimation for the linear Lagrange interpolation, 
one can turn to the Fujino-Morley FEM space along with the interpolation therein; see applications in \cite{liu-you-2018, Liao-Shu-Liu-2019}. 
\end{remark}

\section{Numerical examples}

In this section, we consider the error estimation for the polynomial projection operator $P_k$ ($k=0,1,2$)
over a triangle and tetrahedron element $T$.
$$
\|u-P_ku\|_{L^2(T)} \le C_k \|\nabla u\|_{L^2(T)},\quad \forall u \in H^1(T)\:.
$$

To provide explicit lower bounds for the constants, one need to solve the corresponding eigenvalue problems by using Theorem \ref{thm-lower-bound-eig-main} and \ref{thm-lower-bound-eig-main-M}: 
$C_k$ ($k\ge 0$) correspond to the first eigenvalue of problem in Case 2, while $C_0$ also corresponds to the second eigenvalue of Case 1.
The lower bounds of constants are obtained by using the quadratic conforming finite element method for each example.

The estimation of $C_k$ below is implemented with interval arithmetic and the eigenvalue problems of matrices are solved by using 
the method of Behnke \cite{Behnke1991} along with interval arithmetic toolbox INTLAB \cite{Ru99a}.

As we will see that error constant $C_1$ is almost but less than half of $C_0$ for either case. Constant $C_2$ has smaller value than $C_1$, but the 
improvement is very limited. This implies that if function $u$ only has $H^1(\Omega)$ regularity, the increased cost in the computation for higher degree $k>1$ 
may not be worth the improvement of projection error.

\subsection{Domain as triangle element}

Let us consider the domain as the following triangles.
$$
K_1:(0,0), (1,0), (0,1);~~ K_2:(0,0), (1,0), (\frac{1}{2},\frac{\sqrt{3}}{2});~~ K_2:(0,0), (\frac{1}{2},0), (\frac{1}{2},\frac{\sqrt{3}}{2})\:. 
$$

The mesh in the computation is created by splitting the triangle domain uniformly for $5$ times.
The estimation of $C_k$ is displayed in Table \ref{table:2d_c_k}.
\begin{table}[h]
\begin{center}
\caption{\label{table:2d_c_k}$C_k$ for triangular domains}

\begin{tabular}{|c|c|c|c|}
\hline
\rule[-3mm]{0mm}{8mm}{}
 & $K_1$ & $K_2$ & $K_3$ \\
\hline
\rule[-3mm]{0mm}{8mm}{}
$C_0$ & $0.318_{3}^{6}$ & $0.238_7^9$ & $0.238_7^9$ \\
\hline
\rule[-3mm]{0mm}{8mm}{}
$C_1$ & $0.17_{55}^{60}$ & $0.13_{78}^{81}$ & $0.13_{17}^{20}$ \\
\hline
\rule[-3mm]{0mm}{8mm}{}
$C_2$ & $0.122_{2}^{8}$ & $0.095_{33}^{63}$ & $0.093_{26}^{58}$ \\
\hline
\end{tabular}
\end{center}
\end{table}

\subsection{Domain as tetrahedron element}

Define vertices $p_i$'s in 3D space as follows,
$$
p_1=(0,0,0),~~ p_2=(1,0,0), ~~p_3=(0,1,0), ~~p_4=(0,0,1), ~~p_5=(1,1,1)\:.
$$
Thus, a cube domain can be divided into tetrahedrons with the shape like $T_{1}=(p_1, p_2, p_3, p_4)$ and  $T_{2}=(p_2, p_3, p_4, p_5)$.
Let $p_6$ and  $p_7$ be the centers of $T_{1}$ and $T_{2}$, respectively.
Let us follow S. Zhang's method to subdivide of a tetrahedron into $4$ sub-tetrahedrons by its centroid, which is needed in stable computation for Stokes equations \cite{Zhang-MC-2014}.
Thus, we have totally $5$ types of sub-tetrahedrons; see Table \ref{table:t_list}.
\begin{table}[h]
\begin{center}
\caption{\label{table:t_list} $5$ types of tetrahedrons}
\begin{tabular}{ccccc}
\hline
$T_{1}$ & $T_{2}$ & $T_3$ & $T_4$ & $T_5$ \\
\hline
$(p_1,p_2,p_3,p_4)$ & $(p_2,p_3,p_4, p_5)$ & $(p_1,p_2,p_3,p_6)$ &  $(p_2,p_3,p_4,p_6)$& $(p_2,p_3,p_4,p_7)$ \\
\hline
\end{tabular}
\end{center}
\end{table}
 
For each $T_i$, we estimate constant $C_k$ ($k=0,1,2$) and display the results in Table \ref{table:3d_c_k}.
The mesh is created by splitting the tetrahedron domain uniformly for $4$ times.
\begin{table}[h]
\begin{center}

\caption{\label{table:3d_c_k}$C_k$ for tetrahedron domains}

\begin{tabular}{|c|c|c|c|c|c|}
\hline
\rule[-3mm]{0mm}{8mm}{}
 & $T_1$ & $T_2$ & $T_3$ & $T_4$ & $T_5$ \\
\hline
\rule[-3mm]{0mm}{8mm}{}
$C_0$ & $0.2_{63}^{70}$ & $0.2_{73}^{84} $ & $0.24_5^9$ & $0.2_{56}^{63}$ & $0.2_{57}^{64}$  \\
\hline
\rule[-3mm]{0mm}{8mm}{}
$C_1$ & $0.1_{62}^{74}$ &  $0.1_{72}^{90}$ & $0.1_{44}^{50} $ & $0.1_{58}^{70}$ & $0.1_{59}^{71}$ \\
\hline
\rule[-3mm]{0mm}{8mm}{}
$C_2$ & $0.1_{12}^{28} $ & $0.1_{19}^{43}$  & $0._{099}^{107}$ & $0.1_{07}^{23}$ & $0.1_{09}^{25}$ \\
\hline
\end{tabular}
\end{center}
\end{table}

\section{Summary}

In this paper, we discuss the eigenvalue problem formulated with bilinear forms $M$, $N$. Particularly, for either $M$ or $N$ being positive semi-definite, 
we show how to obtain explicit lower bounds for the eigenvalue problems along with the non-conforming finite elements.
In future research, we are planning to apply the method proposed here to solve more concrete eigenvalue problems.\\

{\bf Acknowledgement} This research is supported by Japan Society for the Promotion of Science, Grand-in-Aid for Young Scientist (B) 26800090 and Grant-in-Aid for
Scientific Research (C) 18K03411.

\bibliographystyle{plain}
\bibliography{library}

\end{document}